\DeclareMathOperator{\Tr}{Tr}
\DeclareMathOperator{\Gr}{Gr}
\renewcommand{\phi}[0]{\varphi}
\renewcommand{\theta}[0]{\vartheta}
\renewcommand{\epsilon}[0]{\varepsilon}
\newcommand{\Pro}{\text{$\mathbf{P}^1$}}
\newcommand{\F}{\text{$\mathbf{F}$}}
\newtheorem{theorem}{Theorem}[section]
\newtheorem{lemma}[theorem]{Lemma}
\theoremstyle{definition}
\newtheorem{definition}[theorem]{Definition}
\newtheorem{example}[theorem]{Example}
\theoremstyle{remark}
\newtheorem{remark}[theorem]{Remark}
\numberwithin{equation}{section}
\begin{document}

\bibliographystyle{amsplain}

\date{}

\keywords{Finite fields, polynomials, sequences}
\subjclass[2010]{11R09, 11T55, 12E05}
\title[]
{Sequences of irreducible polynomials without prescribed coefficients over finite fields of even characteristic}

\author{S.~Ugolini}
\email{sugolini@gmail.com} 

\begin{abstract}
In this paper we deal with the construction of sequences of irreducible polynomials with coefficients in finite fields of even characteristic. We rely upon a transformation used by Kyuregyan in 2002, which generalizes the $Q$-transform employed previously by Varshamov and Meyn for the synthesis of irreducible polynomials. While in the iterative procedure described by Kyuregyan the coefficients of the initial polynomial of the sequence have to satisfy certain hypotheses, in the present paper we construct infinite sequences of irreducible polynomials of non-decreasing degree starting from any irreducible polynomial.
\end{abstract}

\maketitle
\section{Introduction}
In the last decades many investigators dealt with the iterative construction of sequences of irreducible polynomials of non-decreasing degree with coefficients over finite fields. One of the possible construction relies on the so-called $Q$-transform, which takes any polynomial $f$ of positive
degree $n$ to $f^Q(x) = x^n \cdot f(x+x^{-1})$. Among others, the construction of irreducible polynomials via the $Q$-transform was studied by  Varshamov and Garakov \cite{var} and later by Meyn \cite{mey}. 

Adopting the notations of \cite{mey}, the reciprocal $f^*$ of a polynomial $f$ of degree $n$ is the polynomial $f^*(x) = x^n \cdot f(1/x)$. If $f=f^*$, then we say that $f$ is self-reciprocal. In general, the $Q$-transform $f^Q$ of any polynomial $f$ is self-reciprocal.

If $\alpha$ is an element of the field $\F_{2^n}$ with $2^n$ elements for some positive integer $n$, then the absolute trace of $\alpha$ is 
\begin{equation*}
\Tr_n (\alpha) = \sum_{i=0}^{n-1} \alpha^{2^i}.
\end{equation*}
We remind the reader that $\Tr_n(\alpha) \in \{0, 1\}$.

The following result plays a crucial role for the synthesis of sequences of irreducible polynomials over finite fields of even characteristic.

\begin{theorem}\cite[Theorem 9]{mey}\label{mey_bas_2}
The $Q$-transform of a self-reciprocal irreducible monic polynomial $f(x) = x^n + a_1 x^{n-1} + \dots + a_1 x +1 \in \F_{2^k} [x]$ with $\Tr_n (a_1) = 1$ is a self-reciprocal irreducible monic polynomial of the same kind, i.e. $f^Q (x) = x^{2n} + \tilde{a}_1 x^{2n-1} + \dots + \tilde{a}_1 x+ 1$ satisfies $\Tr_n (\tilde{a}_1) = 1$. 
\end{theorem}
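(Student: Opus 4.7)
The plan is to break the claim into four pieces: $f^Q$ is monic of degree $2n$; $f^Q$ is self-reciprocal; $f^Q$ is irreducible over $\F_{2^k}$; and the coefficient $\tilde{a}_1$ of $x^{2n-1}$ in $f^Q$ satisfies the required trace condition. The first two items are immediate from the definition of the $Q$-transform and have already been noted in the introduction, so the real work consists in proving irreducibility and controlling $\tilde{a}_1$.

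For irreducibility, I would pass to the splitting field $\F_{2^{kn}}$ of $f$, write the roots as $\beta_1,\dots,\beta_n$, and use the direct factorization
\begin{equation*}
f^Q(x)=x^n\prod_{i=1}^n\bigl(x+x^{-1}-\beta_i\bigr)=\prod_{i=1}^n(x^2+\beta_i x+1),
\end{equation*}
valid in characteristic $2$. Since $f^Q$ has degree $2n$, it suffices to verify that each quadratic factor $x^2+\beta_i x+1$ is irreducible over $\F_{2^{kn}}$: any such root $\gamma$ then lies in $\F_{2^{2kn}}$ and generates a degree-$2n$ extension of $\F_{2^k}$, forcing its minimal polynomial over $\F_{2^k}$ to coincide with the monic $f^Q$.

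By the classical irreducibility criterion for quadratics in characteristic $2$, the factor $x^2+\beta_i x+1$ is irreducible over $\F_{2^{kn}}$ precisely when the absolute trace of $\beta_i^{-2}$ equals $1$. I would compute this trace via Vieta's formulas on $f$: the hypotheses that $f$ is self-reciprocal with constant term $1$ give $\prod_i\beta_i=1$ and $\sum_i\beta_i^{-1}=a_1$ (the latter because the coefficient of $x$ in $f$ coincides with that of $x^{n-1}$), whence
\begin{equation*}
\sum_{i=1}^n\beta_i^{-2}=\Bigl(\sum_{i=1}^n\beta_i^{-1}\Bigr)^2=a_1^2.
\end{equation*}
Recognising the left-hand side as the relative trace of $\beta^{-2}$ from $\F_{2^{kn}}$ down to $\F_{2^k}$ and then composing with the trace from $\F_{2^k}$ down to $\F_2$ reduces the irreducibility check to verifying that the absolute trace of $a_1$ equals $1$, which is exactly the hypothesis (using that squaring preserves the trace).

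For the statement about $\tilde{a}_1$, I would simply read off the coefficient of $x^{2n-1}$ in the factorization above: contributing $\beta_i x$ from one factor and $x^2$ from the remaining $n-1$ factors yields $\tilde{a}_1=\sum_i\beta_i=a_1$, so the trace condition is preserved verbatim. The main obstacle I anticipate is the careful bookkeeping of the tower of traces $\F_{2^{kn}}\supseteq\F_{2^k}\supseteq\F_2$ and its precise matching with the notation $\Tr_n$ in the hypothesis; once that is pinned down, the remainder is a direct application of Vieta's formulas and the quadratic criterion.
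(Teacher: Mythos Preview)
The paper does not contain a proof of this statement: it is quoted as \cite[Theorem 9]{mey} and used only as background in the introduction, with no proof environment following it. There is therefore nothing in the paper to compare your argument against.

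That said, your sketch is correct and is essentially the standard proof. The factorisation $f^Q(x)=\prod_i(x^2+\beta_i x+1)$, the quadratic irreducibility criterion in characteristic~$2$, the transitivity of trace through the tower $\F_{2^{kn}}\supseteq\F_{2^k}\supseteq\F_2$, and the Vieta computation $\sum_i\beta_i^{-1}=a_1$ all check out; the step from ``each quadratic factor is irreducible over $\F_{2^{kn}}$'' to ``$f^Q$ is irreducible over $\F_{2^k}$'' is justified since $\gamma+\gamma^{-1}=\beta_i$ forces $\F_{2^k}(\gamma)\supseteq\F_{2^{kn}}$. Your observation that $\tilde a_1=\sum_i\beta_i=a_1$ settles the final clause immediately. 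The one wrinkle you flag---that the subscript in $\Tr_n$ does not literally match the field $\F_{2^k}$ where $a_1$ lives---is a notational looseness in the paper's transcription of Meyn's result (Meyn works over $\F_2$, where the ambiguity does not arise), not a defect in your argument.
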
 

Relying upon Theorem \ref{mey_bas_2} Meyn shows, after \cite[Example 3, page 50]{mey}, how to construct iteratively a sequence  $\{f_i \}_{i \geq 0}$ of irreducible polynomials in $\F_{2}[x]$, starting from any monic irreducible polynomial $f_0 =x^n + a_{n-1} x^{n-1} + \dots + a_1 x + a_0 \in \F_2 [x]$ such that $a_{n-1} = a_1 = 1$. The polynomials of the sequence are inductively defined as $f_i := f_{i-1}^Q$ for any positive integer $i$. Moreover, according to Theorem \ref{mey_bas_2}, the degree of $f_i$ is twice the degree of $f_{i-1}$ for any positive integer $i$.

In \cite{seq2} we concentrated on the construction of sequences of irreducible polynomials with coefficients in $\F_2$ removing any assumption on the coefficients of the initial polynomial of the sequence. The drawback of relaxing the hypotheses on the initial polynomial is that we could face a finite number of factorizations of a polynomial in two equal-degree polynomials in $\F_2 [x]$. The number of factorizations depends on the greatest power of $2$ which divides the degree of the initial polynomial of the sequence, as explained in \cite[Section 3.2]{seq2}.

In \cite{kyu_even} Kyuregyan introduced a more general construction of sequences of irreducible polynomials having coefficients in finite fields of even characteristic. Such a construction is based on the transformations which take a polynomial $f$ of degree $n$ to the polynomial $x^n \cdot f(x+ \delta^2 x^{-1})$, for some non-zero element $\delta$ in the field of the coefficients of $f$. For the sake of clarity we introduce a notation for this family of transformations.

\begin{definition}
If $f$ is a polynomial of positive degree $n$ in $\F_{2^s} [x]$, for some positive integer $s$, and $\alpha \in \F_{2^s}^*$, then the $(Q, \alpha)$-transform of $f$ is 
\begin{equation*}
f^{(Q, \alpha)} (x) = x^n \cdot f(x + \alpha x^{-1}). 
\end{equation*} 
\end{definition}  
\begin{remark}
For $\alpha = 1$ the $(Q, \alpha)$-transform coincides with the $Q$-transform. 
\end{remark}  

The forthcoming theorem proved by Kyuregyan \cite{kyu_even} furnishes an iterative procedure for constructing sequences of irreducible polynomials with coefficients in finite fields of even characteristic. The procedure is based on the $(Q, \alpha)$-transforms and requires that some hypotheses on the initial polynomial of the sequence are satisfied. We state \cite[Theorem 3]{kyu_even} using the notation introduced in the present paper. Actually, we adapt the statement of the theorem as presented in \cite{kyu_ind}.

\begin{theorem}\cite[Proposition 3]{kyu_ind}
Let $\delta \in \F_{2^s}^*$ and $F_1 (x) = \sum_{u=0}^n c_u x^u$ be an irreducible polynomial over $\F_{2^s}$ whose coefficients satisfy the conditions 
\begin{equation*}
\Tr_s \left( \frac{c_1 \delta}{c_0} \right) = 1 \quad \text{ and } \quad \Tr_s \left( \frac{c_{n-1}}{\delta} \right) = 1.
\end{equation*}
Then all members of the sequence $(F_k(x))_{k \geq 1}$ defined by 
\begin{equation*}
F_{k+1} (x) = F_k^{(Q, \delta^2)} (x), \quad k \geq 1
\end{equation*}
are irreducible polynomials over $\F_{2^s}$.
\end{theorem}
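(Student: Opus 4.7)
The plan is to argue by induction on $k$. I may assume $F_1$ is monic, since scaling a polynomial does not affect irreducibility. For $k \geq 1$ set $n_k := 2^{k-1} n$ and $T_k := \Tr_s(\delta\, c_{k,1}/c_{k,0})$, where $c_{k,u}$ denotes the coefficient of $x^u$ in $F_k$. The key intermediate claim is: if $F_k$ is irreducible of degree $n_k$, then $F_{k+1} = F_k^{(Q,\delta^2)}$ is irreducible of degree $2 n_k$ iff $T_k = 1$. Granted this, it suffices to verify $T_k = 1$ for every $k$, which I would extract from the two hypotheses of the theorem together with structural identities for the $(Q, \delta^2)$-transform.

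For the irreducibility criterion I would work at the level of roots. Fix a root $\alpha \in \F_{2^{s n_k}}$ of $F_k$. A root $\beta$ of $F_{k+1}$ lying over $\alpha$ satisfies $\beta + \delta^2 \beta^{-1} = \alpha$, equivalently $\beta^2 + \alpha \beta + \delta^2 = 0$. The substitution $\gamma = \beta/\alpha$ rewrites this as the Artin--Schreier equation $\gamma^2 + \gamma = \delta^2/\alpha^2$, which is solvable in $\F_{2^{s n_k}}$ exactly when $\Tr_{s n_k}(\delta^2/\alpha^2) = 0$. Using $\Tr(y^2) = \Tr(y)$ in characteristic $2$, the tower $\F_2 \subset \F_{2^s} \subset \F_{2^{s n_k}}$, and Vieta's identity $\sum_j \alpha_j^{-1} = c_{k,1}/c_{k,0}$ over the conjugates of $\alpha$, this absolute trace collapses to $\Tr_s(\delta\, c_{k,1}/c_{k,0}) = T_k$. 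Hence $F_{k+1}$ is irreducible of degree $2 n_k$ precisely when $T_k = 1$.

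The base case $T_1 = 1$ is the first hypothesis. For $k \geq 2$ I exploit the $\delta^2$-symmetry of any $(Q, \delta^2)$-transform: a direct substitution $x \mapsto \delta^2/x$ in the defining formula yields $x^{n_k} F_k(\delta^2/x) = \delta^{n_k} F_k(x)$, which forces $c_{k, u} = \delta^{n_k - 2u}\, c_{k, n_k - u}$ for all $u$; in particular $\delta\, c_{k,1}/c_{k,0} = c_{k, n_k - 1}/(\delta\, c_{k, n_k})$. A short expansion of $x^N f(x + \delta^2 x^{-1})$ shows that the $(Q, \delta^2)$-transform preserves both the leading and the second-leading coefficient, so inductively $c_{k, n_k} = 1$ and $c_{k, n_k - 1} = c_{n-1}$. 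Therefore $T_k = \Tr_s(c_{n-1}/\delta) = 1$ by the second hypothesis, closing the induction.

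The main delicate point I anticipate is the coefficient bookkeeping---verifying the $\delta^2$-symmetry, the stability of the top two coefficients under iteration, and the precise form of Vieta for sums of reciprocals in characteristic $2$. None of these is conceptually deep, but a slip in an index or a power of $\delta$ would derail the collapse of $T_k$ onto the second hypothesis, so this is where the careful work lies.
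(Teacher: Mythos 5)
The paper states this theorem without proof---it is imported verbatim from Kyuregyan's work and only cited here---so there is no in-paper argument to compare against. Your proof is correct and is essentially the standard argument behind results of this type (it is the natural generalization of Meyn's proof of Theorem~\ref{mey_bas_2}): the Artin--Schreier reduction of $\beta^2+\alpha\beta+\delta^2=0$ to $\gamma^2+\gamma=\delta^2/\alpha^2$, the collapse of the absolute trace via transitivity and Vieta to $\Tr_s(\delta c_{k,1}/c_{k,0})$, the identity $x^{n_k}F_k(\delta^2/x)=\delta^{n_k}F_k(x)$ for any $(Q,\delta^2)$-transform, and the invariance of the top two coefficients under the transform are all verified correctly, and together they do close the induction. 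The one point worth flagging is your opening normalization: the condition $\Tr_s(c_1\delta/c_0)=1$ is invariant under scaling $F_1$ but $\Tr_s(c_{n-1}/\delta)=1$ is not, so "assume $F_1$ monic without loss of generality" is not literally harmless for the statement as written; your argument really proves the theorem with the second hypothesis read as $\Tr_s\bigl(c_{n-1}/(\delta c_n)\bigr)=1$, which coincides with the stated one exactly when $F_1$ is monic (the implicitly intended reading).
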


In the present paper we aim to construct sequences of irreducible polynomials $\{ f_k \}_{k \geq 0}$, where the initial polynomial $f_0$ of the sequence is monic and irreducible, but no  extra assumption is made on its coefficients.

While our previous investigation \cite{seq2} was based on the dynamics of the map $x \mapsto x+x^{-1}$ over finite fields of characteristic $2$, studied by us in \cite{SU2}, the starting point for the present paper is the dynamics of the maps
\begin{displaymath}
\begin{array}{rcl}
\theta_{\alpha} : 
x & \mapsto & 
\begin{cases}
\infty & \text{if $x \in \{ 0, \infty \}$,}\\
x + \alpha x^{-1} & \text{otherwise}
\end{cases}
\end{array}
\end{displaymath}
over $\Pro(\F_{2^s}) = \F_{{2^s}} \cup \{ \infty \}$, for any positive integer $s$ and any choice of $\alpha \in \F_{2^s}^*$.

Actually, once we know the dynamics of the map $\theta_1: x \mapsto x + x^{-1}$,  we can transfer the results to all the maps $\theta_{\alpha}$. Consider in fact the map defined over $\Pro(\F_{2^s}) = \F_{{2^s}} \cup \{ \infty \}$, for any positive integer $s$ and for any $\gamma \in \F_{2^s}^*$, as follows:
\begin{displaymath}
\begin{array}{rcl}
\psi_{\gamma} : 
x & \mapsto & 
\begin{cases}
\infty & \text{if $x = \infty$,}\\
\gamma \cdot x & \text{otherwise.}
\end{cases}
\end{array}
\end{displaymath}

If $\gamma = \sqrt{\alpha}$, namely $\gamma$ is the square root of $\alpha$ for a generic element $\alpha \in \F_{2^s}^*$, then 
\begin{equation*}
\theta_{\alpha}  = \psi_{\gamma} \circ \theta_{1} \circ \psi_{\gamma^{-1}}.
\end{equation*}

We can construct a graph $\Gr_s (\alpha)$ related to the dynamics of the map $\theta_{\alpha}$ over $\Pro(\F_{2^s})$ as in \cite{SU2}. The vertices of the graph are labelled by the elements of $\Pro (\F_{2^s})$ and an arrow joins a vertex $\beta$ to a vertex $\gamma$ if $\gamma = \theta_{\alpha} (\beta)$. The graph $\Gr_s (\alpha)$ is isomorphic to $\Gr_s (1)$. As in \cite{SU2} we say that an element $\beta \in \Pro(\F_{2^s})$ is $\theta_{\alpha}$-periodic if $\theta_{\alpha}^i (\beta) = \beta$ for some positive integer $i$. Moreover, since $\Pro (\F_{2^s})$ is finite, any non-$\theta_{\alpha}$-periodic element belonging to  $\Pro (\F_{2^s})$ is preperiodic.  

In Section \ref{graphs_section} we briefly describe some properties of the graphs $\Gr_s (\alpha)$, while in Section \ref{sequences_section} we study the sequences of irreducible polynomials generated by the iterations of the $(Q, \alpha)$-transforms.

\section{The structure of the graphs $\Gr_s (\alpha)$}\label{graphs_section}
Since all the graphs  $\Gr_s (\alpha)$ are isomorphic to  $\Gr_s (1)$, from \cite{SU2} we deduce the following, for a chosen element $\alpha \in \F_{2^s}^*$:
\begin{itemize}
\item every connected component of $\Gr_s (\alpha)$ is formed by a cycle whose vertices are roots of binary trees of the same depth;
\item either all the trees of a connected component of $\Gr_s (\alpha)$ have depth $1$ or all the trees of that component have depth $l+2$, being $2^l$ the greatest power of $2$ which divides $s$ \cite[Lemma 4.3, Lemma 4.4]{SU2}.
\end{itemize}

\begin{example}\label{graph_6}
In this example we construct the graph $\Gr_6(\alpha)$, where $\alpha$ is a root of the Conway polynomial $x^6+x^4+x^3+x+1$, which is primitive in $\F_2 [x]$. The labels of the vertices are $\infty$, `0' (the zero of $\F_2$) and the exponents $i$ of the powers $\alpha^i$, for $0 \leq i \leq 62$.

We notice that the graph $\Gr_6(\alpha)$ is isomorphic to the graph $\Gr_6(1)$, which the reader can find in \cite[Example 4.2]{seq2}. Since the greatest power of $2$ dividing $6$ is $2$, the trees belonging to a connected component of $\Gr_{6} (\alpha)$ either have depth $3$ or $1$.

\begin{center}
\unitlength=3pt
    \begin{picture}(86, 86)(-43,-43)
    \gasset{Nw=4,Nh=4,Nmr=2,curvedepth=0}
    \thinlines
    \tiny
    \node(N1)(0,10){$23$}
    \node(N2)(-8.67,-5){$14$}
    \node(N3)(8.67,-5){$59$}

    \node(N11)(0,20){$5$}
    \node(N21)(-17.34,-10){$41$}
    \node(N31)(17.34,-10){$50$}
    
    \node(N112)(-15,25.98){$60$}
    \node(N211)(-30,0){$25$}
    \node(N212)(-15,-25.98){$39$}
    \node(N311)(15,-25.98){$18$}
    \node(N312)(30,0){$46$}
    \node(N111)(15,25.98){$4$}
    
    \node(N1121)(-10.36,38.64){$7$}
    \node(N1122)(-28.29,28.29){$57$}
    \node(N2111)(-38.64,10.36){$19$}
    \node(N2112)(-38.64,-10.36){$45$}
    \node(N2121)(-28.29,-28.29){$10$}
    \node(N2122)(-10.36,-38.64){$54$}
    \node(N3111)(10.36,-38.64){$6$}
    \node(N3112)(28.29,-28.29){$58$}
    \node(N3121)(38.64,-10.36){$13$}
    \node(N3122)(38.64,10.36){$51$}
    \node(N1111)(28.29,28.29){$21$}
    \node(N1112)(10.36,38.64){$43$}
   
    \drawedge(N1,N2){}
    \drawedge(N2,N3){}
    \drawedge(N3,N1){}
    
    \drawedge(N11,N1){}
    \drawedge(N21,N2){}
    \drawedge(N31,N3){}
    
    \drawedge(N111,N11){}
    \drawedge(N112,N11){}
    \drawedge(N211,N21){}
    \drawedge(N212,N21){}
    \drawedge(N311,N31){}
    \drawedge(N312,N31){}
    
    \drawedge(N1111,N111){}
    \drawedge(N1112,N111){}
    \drawedge(N1121,N112){}
    \drawedge(N1122,N112){}
    \drawedge(N2111,N211){}
    \drawedge(N2112,N211){}
    \drawedge(N2121,N212){}
    \drawedge(N2122,N212){}
    \drawedge(N3111,N311){}
    \drawedge(N3112,N311){}
    \drawedge(N3121,N312){}
    \drawedge(N3122,N312){}
\end{picture}
\end{center}
\begin{center}
    \unitlength=3pt
    \begin{picture}(80, 48)(-5,-24)
    \gasset{Nw=4,Nh=4,Nmr=2,curvedepth=0}
    \thinlines
    \tiny
    \node(N1)(10,0){$28$}
    \node(N2)(7.66,6.42){$35$}
    \node(N3)(1.73,9.84){$55$}
    \node(N4)(-5,8.66){$31$}
    \node(N5)(-9.39,3.42){$17$}
    \node(N6)(-9.39,-3.42){$22$}
    \node(N7)(-5,-8.66){$16$}
    \node(N8)(1.73,-9.84){$44$}
    \node(N9)(7.66,-6.42){$61$}
    
    \node(N11)(20,0){$3$}
    \node(N12)(15.32,12.84){$36$}	
    \node(N13)(3.46,19.68){$29$}
    \node(N14)(-10,17.32){$9$}
    \node(N15)(-18.78,6.84){$33$}
    \node(N16)(-18.78,-6.84){$47$}
    \node(N17)(-10,-17.32){$42$}
    \node(N18)(3.46,-19.68){$48$}
    \node(N19)(15.32,-12.84){$20$}

    \drawedge(N1,N2){}
    \drawedge(N2,N3){}
    \drawedge(N3,N4){}
    \drawedge(N4,N5){}
    \drawedge(N5,N6){}
    \drawedge(N6,N7){}
    \drawedge(N7,N8){}
    \drawedge(N8,N9){}
    \drawedge(N9,N1){}
    
    \gasset{curvedepth=0}
     
    \drawedge(N11,N1){}
    \drawedge(N12,N2){}
    \drawedge(N13,N3){}
    \drawedge(N14,N4){}
    \drawedge(N15,N5){}
    \drawedge(N16,N6){}
    \drawedge(N17,N7){}
    \drawedge(N18,N8){}
    \drawedge(N19,N9){}

    \node(L1)(60,0){$0$}
    \node(L2)(57.66,6.42){$56$}
    \node(L3)(51.73,9.84){$27$}
    \node(L4)(45,8.66){$24$}
    \node(L5)(40.61,3.42){$38$}
    \node(L6)(40.61,-3.42){$15$}
    \node(L7)(45,-8.66){$30$}
    \node(L8)(51.73,-9.84){$2$}
    \node(L9)(57.66,-6.42){$12$}
    
    \node(L11)(70,0){$52$}
    \node(L12)(65.32,12.84){$1$}
    \node(L13)(53.46,19.68){$8$}
    \node(L14)(40,17.32){$37$}
    \node(L15)(32.22,6.84){$40$}
    \node(L16)(32.22,-6.84){$26$}
    \node(L17)(40,-17.32){$49$}
    \node(L18)(53.46,-19.68){$34$}
    \node(L19)(65.32,-12.84){$62$}

    \drawedge(L1,L2){}
    \drawedge(L2,L3){}
    \drawedge(L3,L4){}
    \drawedge(L4,L5){}
    \drawedge(L5,L6){}
    \drawedge(L6,L7){}
    \drawedge(L7,L8){}
    \drawedge(L8,L9){}
    \drawedge(L9,L1){}
    
    \gasset{curvedepth=0}
     
    \drawedge(L11,L1){}
    \drawedge(L12,L2){}
    \drawedge(L13,L3){}
    \drawedge(L14,L4){}
    \drawedge(L15,L5){}
    \drawedge(L16,L6){}
    \drawedge(L17,L7){}
    \drawedge(L18,L8){}
    \drawedge(L19,L9){}
    
    \node(M1)(85,-12){$\infty$}
    \node(M11)(85,-2){`0'}
    \node(M111)(85,8){$32$}

    \node(M1111)(80,18){$11$}
    \node(M1112)(90,18){$53$}

    \drawedge(M11,M1){}
    \drawedge(M111,M11){}
    \drawedge(M1111,M111){}
    
    \drawedge(M1112,M111){}
	\drawloop[loopangle=-90](M1){}
\end{picture}
\end{center}  
\end{example}

\section{The synthesis of irreducible polynomials via the $(Q, \alpha)$-transforms}\label{sequences_section}
The following lemma about the irreducibility of the polynomials $f^{(Q,\alpha)}$ holds in analogy with \cite[Lemma 4]{mey}.
\begin{lemma}\label{iter_lem}
If $f$ is an irreducible monic polynomial of degree $n$ in $\F_{2^s} [x]$, for some positive integers $n$ and $s$, and $\alpha \in \F_{2^s}^*$, then either $f^{(Q,\alpha)}$ is an irreducible monic polynomial of degree $2n$ in $\F_{2^s} [x]$ or $f^{(Q,\alpha)}$ splits into the product of a pair of irreducible monic polynomials $g_1, g_2$ of degree $n$ in $\F_{2^s} [x]$. In this latter case at least one of $g_1$ and $g_2$ has  no $\theta_{\alpha}$-periodic roots.   
\end{lemma}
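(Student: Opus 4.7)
The approach parallels Meyn's Lemma~4, translated from $\theta_1$ to $\theta_\alpha$. First I would compute
\[
f^{(Q,\alpha)}(x) \;=\; x^{n}\prod_{i=1}^{n}\bigl(x+\alpha x^{-1}+\beta_i\bigr) \;=\; \prod_{i=1}^{n}\bigl(x^{2}+\beta_i x+\alpha\bigr),
\]
where $\beta_1,\dots,\beta_n$ are the roots of $f$ in $\overline{\F_{2^s}}$. Hence $f^{(Q,\alpha)}$ is monic of degree $2n$, and its roots are exactly the $\theta_\alpha$-preimages of the $\beta_i$. The degenerate case $f(x)=x$ yields $f^{(Q,\alpha)}=(x+\sqrt{\alpha})^{2}$, whose unique root $\sqrt{\alpha}$ satisfies $\theta_\alpha(\sqrt{\alpha})=0$ and is therefore non-periodic, so the statement holds trivially. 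I may thus assume all $\beta_i\neq 0$; then for each $i$ the two preimages $\gamma_i,\gamma_i'$ of $\beta_i$ are distinct and satisfy $\gamma_i+\gamma_i'=\beta_i$, $\gamma_i\gamma_i'=\alpha$.

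Next I would invoke the Frobenius $\phi\colon x\mapsto x^{2^s}$, which commutes with $\theta_\alpha$ because $\alpha\in\F_{2^s}$. Write $\gamma:=\gamma_1$, $\gamma':=\gamma_1'$. The $\phi$-orbit of $\gamma$ projects onto the $\phi$-orbit of $\beta_1$ (of size $n$), so it has size in $\{n,2n\}$. If it equals $2n$, then $f^{(Q,\alpha)}$ is the minimal polynomial of $\gamma$ and hence irreducible. If it equals $n$, applying $\phi^n$ to $\gamma'=\beta_1+\gamma$ gives $\phi^n(\gamma')=\gamma'$, so $\gamma'$ also has orbit of size $n$; moreover the two orbits are distinct, since $\phi^j(\gamma)=\gamma'$ for some $0\le j<n$ would force $\phi^j(\beta_1)=\beta_1$, giving $j=0$ and $\gamma=\gamma'$. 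This yields the factorization $f^{(Q,\alpha)}=g_{1}g_{2}$ into two irreducible polynomials of degree $n$, with $\gamma$ a root of $g_1$ and $\gamma'$ a root of $g_2$.

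For the final assertion, suppose for contradiction that in this reducible case both $g_1$ and $g_2$ admit a $\theta_\alpha$-periodic root. Because $\phi$ and $\theta_\alpha$ commute, periodicity is constant on $\phi$-orbits, so every root of $g_1$ and every root of $g_2$ is periodic; in particular both $\gamma$ and $\gamma'$ are. Then both lie on cycles whose $\theta_\alpha$-successor is $\beta_1$. Since in any functional graph a vertex belongs to at most one cycle, these two cycles coincide, and on that cycle $\beta_1$ has a unique predecessor; this forces $\gamma=\gamma'$, whence $\beta_1=\gamma+\gamma'=0$, contradicting our standing assumption. The only genuinely non-routine ingredient is this last uniqueness-of-predecessor observation, but it is immediate from the structural description of $\Gr_s(\alpha)$ recalled in Section~\ref{graphs_section}.
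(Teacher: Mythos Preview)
Your argument is correct and follows essentially the same route as the paper: both proofs identify the roots of $f^{(Q,\alpha)}$ as the $\theta_\alpha$-preimages of the roots of $f$, use the Frobenius (equivalently, the field of definition of a preimage $\gamma$) to obtain the irreducible-versus-two-factors dichotomy, and then exploit the graph structure of $\Gr_{sn}(\alpha)$ for the periodicity claim. The only real difference is cosmetic: for the last assertion the paper argues directly that one of the two preimages of $\beta$ lies at level~$1$ of the tree rooted at $\beta$ (hence is non-periodic), whereas you reach the same conclusion by contradiction via the equivalent observation that a periodic vertex has a unique periodic $\theta_\alpha$-preimage.
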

\begin{proof}
Let $\beta \in \F_{2^{sn}}$ be a root of $f$ and $\gamma$ a solution of the equation $\theta_{\alpha} (x) = \beta$. Then, $f^{(Q,\alpha)} (\gamma) = \gamma^n \cdot f(\gamma+ \alpha \gamma^{-1}) = 0$. Since $\theta_{\alpha} (\gamma) = \beta$, either $\gamma$ belongs to $\F_{2^{sn}}$ or $\gamma$ belongs to $\F_{2^{2sn}} \backslash \F_{2^{sn}}$. In the latter case $f^{(Q, \alpha)}$ is irreducible of degree $2n$ over $\F_{2^s}$, while in the former case $f^{(Q,\alpha)}$ splits into the product of a pair of irreducible monic polynomials $g_1, g_2$ of degree $n$.

Suppose now that $f^{(Q,\alpha)} (x) = g_1 (x) \cdot g_2 (x)$, where $g_1$ and $g_2$ have degree $n$. We proceed proving that one of $g_1$ and $g_2$ has no $\theta$-periodic roots.  First we notice that $\theta_{\alpha} (x) = \beta$ if and only if $x \in \{\gamma, \alpha \gamma^{-1} \}$. If $\beta$ is not $\theta_{\alpha}$-periodic, then the same holds for $\gamma$ too. Conversely, up to a renaming, $\gamma$ belongs to the first level of the binary tree of $\Gr_{sn} (\alpha)$ rooted in $\beta$. Since $f^{(Q, \alpha)} (\gamma) = 0$, we conclude that either $g_1 (\gamma) = 0$ or $g_2 (\gamma) = 0$. 

Suppose, without loss of generality, that $g_1(\gamma) = 0$. If $\delta$ is any root of $g_1$, then $\gamma = \delta^{2^{si}}$, for some integer $i$. Therefore, $\theta_{\alpha}^k (\delta) = \delta$ for some positive integer $k$ if and only if $(\theta_{\alpha}^k (\delta))^{2^{si}} = \delta^{2^{si}} = \gamma$. Since $(\theta_{\alpha}^k (\delta))^{2^{si}} = \theta_{\alpha}^k (\gamma)$, we conclude that $\delta$ is $\theta_{\alpha}$-periodic if and only if $\gamma$ is $\theta_{\alpha}$-periodic. Hence, all the roots of $g_1$ are not $\theta_{\alpha}$-periodic. 
\end{proof}

The following theorem describes the iterative procedure for the construction of irreducible polynomials over finite fields of even characteristic via the $(Q, \alpha)$-transforms.

\begin{theorem}\label{iter_thm}
Let $f_0 \in \F_{2^s} [x]$, where $s$ is a positive integer, be an irreducible monic polynomial of positive degree $n$. Suppose that $2^{l_s}$ is the greatest power of $2$ which divides $s$, while $2^{l_n}$ is the greatest power of $2$ which divides $n$. Fix an element $\alpha$  in $\F_{2^s}^*$.

Let $f_1$ be one of the at most two irreducible monic polynomials which factor $f_0^{(Q, \alpha)}$. Suppose also that the roots of $f_1$ are not $\theta_{\alpha}$-periodic. 

Consider the sequence of polynomials $\{f_i \}_{i \geq 0}$ constructed inductively setting $f_i$ equal to one of the irreducible monic factors of $f_{i-1}^{(Q,\alpha)}$, for $i \geq 2$.

Then, there exists a positive integer $t \leq l_s+l_n+3$ such that $f_t$ has degree $2n$, while $f_{t+1}$ has degree $4n$. Moreover, for any $i \geq t$, the polynomial $f_i^{(Q,\alpha)}$ is irreducible in $\F_{2^s} [x]$ and the degree of $f_{i+1}$ is twice the degree of $f_i$.
\end{theorem}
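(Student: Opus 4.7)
Let $\beta \in \F_{2^{sn}}$ be a root of $f_0$. I would construct inductively a coherent sequence $\gamma_0 = \beta, \gamma_1, \gamma_2, \ldots$ with $\gamma_i$ a root of $f_i$ and $\theta_\alpha(\gamma_i) = \gamma_{i-1}$; existence of such a choice is guaranteed by the proof of Lemma \ref{iter_lem}, and the hypothesis on $f_1$ forces every $\gamma_i$ (for $i \geq 1$) to lie strictly above the cycle of its connected component. Since $\deg f_i = [\F_{2^s}(\gamma_i):\F_{2^s}]$, tracking $\deg f_i$ amounts to monitoring the smallest $k \geq 0$ with $\gamma_i \in \F_{2^{2^k sn}}$.

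The dynamical reformulation is the following: $\gamma_{i+1}$ is a root of $x^2 + \gamma_i x + \alpha$, so if $\gamma_i$ is not a leaf of its binary tree in $\Gr_{2^k sn}(\alpha)$ then $\gamma_{i+1}$ stays in the same field one level higher (degree unchanged), whereas if $\gamma_i$ is a leaf then both preimages lie in $\F_{2^{2^{k+1} sn}} \setminus \F_{2^{2^k sn}}$ (degree doubles). Applying the structure result of Section \ref{graphs_section} to $\Gr_{2^k sn}(\alpha)$ bounds the nontrivial tree depth by $l_s + l_n + k + 2$, which controls how many consecutive climb steps are available before a forced escape.

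To show $t \leq l_s + l_n + 3$ I would run a case analysis on the depth $d_0 \in \{1, l_s+l_n+2\}$ of the tree containing $\beta$ in $\Gr_{sn}(\alpha)$ and on the level $j_0$ of $\beta$. In the maximal-depth case, $\gamma$ climbs for $d_0 - j_0$ steps inside $\F_{2^{sn}}$, then escapes to $\F_{2^{2sn}}$ at level $l_s + l_n + 3$, which is already the leaf level of $\Gr_{2sn}(\alpha)$; the next iterate therefore escapes to $\F_{2^{4sn}}$, so $t = l_s + l_n + 3 - j_0 \leq l_s + l_n + 3$. In the degenerate case $d_0 = 1$, the tree in $\Gr_{2sn}(\alpha)$ cannot again have depth $1$ (it strictly contains the old tree together with the newly attached preimages of the old leaves, hence has depth $\geq 2$, forcing the value $l_s + l_n + 3$), so an extra climb inside $\F_{2^{2sn}}$ is required before reaching its leaf; a direct count still yields $t \leq l_s + l_n + 3$, with equality attained when $\beta$ is $\theta_\alpha$-periodic.

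For the ``moreover'' clause, the above analysis produces $\gamma_t$ at the leaf of $\Gr_{2sn}(\alpha)$ in every case. An induction then shows that $\gamma_{t+j}$ sits at the leaf of $\Gr_{2^{j+1} sn}(\alpha)$ for every $j \geq 0$: each escape pushes the iterate up by exactly one level, and the maximal tree depth also grows by exactly one when the tower doubles, so the leaf condition propagates indefinitely. Consequently $f_i^{(Q,\alpha)}$ is irreducible for every $i \geq t$ and the degrees double thereafter. The main technical obstacle is the depth-$1$ bookkeeping: it prevents one from simply defining $t$ as the first index at which $\deg f_i = 2n$, and is precisely what makes the bound $l_s + l_n + 3$ sharp.
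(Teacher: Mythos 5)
Your proposal is correct and follows essentially the same route as the paper: lift a root of $f_0$ to a coherent sequence of preimages under $\theta_\alpha$, use the depth formula for the trees of $\Gr_{2^k sn}(\alpha)$ to bound how many levels can be climbed before a forced escape from $\F_{2^{2sn}}$, and propagate the leaf condition to get irreducibility of $f_i^{(Q,\alpha)}$ for $i\geq t$. Your case analysis on the depth and level of the initial tree, and the explicit induction showing $\gamma_{t+j}$ remains a leaf of $\Gr_{2^{j+1}sn}(\alpha)$, merely spell out details the paper treats more succinctly.
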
  
\begin{proof}
Since $f_0$ is an irreducible polynomial of degree $n$ in $\F_{2^s} [x]$, all its roots are in $\F_{2^{sn}}$. Let $\beta_0 \in \F_{2^{sn}}$ be a root of $f_0$. Then, it is possible to construct inductively a sequence of elements $\{\beta_i \}_{i \geq 0}$, where any $\beta_i$ belongs  to an appropriate extension of $\F_{2^{sn}}$, such that:
\begin{itemize}
\item any $\beta_i$ is a root of $f_i$;
\item $\beta_i = \theta_{\alpha} (\beta_{i+1})$.
\end{itemize}
Since $\beta_0 \in \F_{2^{sn}}$ and $\beta_1$ is not $\theta_{\alpha}$-periodic,  all the elements $\beta_i$, for $i \geq 1$, belong to a tree of the graph $\Gr_{2sn} (\alpha)$. In particular, such a tree has depth at least $2$ in $\Gr_{2sn} (\alpha)$. Indeed, there are two possibilities for $\beta_1$: either $\beta_1 \in \F_{2^{sn}}$ or $\beta_1 \in \F_{2^{2sn}} \backslash \F_{2^{sn}}$. In the former case $\beta_1$ lies on a level not smaller than $1$ of a tree in $ \Gr_{sn} (\alpha)$ and consequently $\beta_2$ lies on a level not smaller than $2$ of a tree in $ \Gr_{2sn} (\alpha)$. In the latter case, $\beta_0$ is a leaf of a tree in $\Gr_{sn} (\alpha)$. Such a tree has depth at least $1$. Consequently, $\beta_2$ lies on a level not smaller than $2$ of a tree in $ \Gr_{2sn} (\alpha)$. In both cases we conclude that such a tree has not depth $1$ in $\Gr_{2sn} (\alpha)$, namely it has depth $(1+l_n+l_s)+2$ in $\Gr_{2sn} (\alpha)$ (see Section \ref{graphs_section}). Hence, there exists a positive integer $t \leq l_n + l_s+3$ such that $\beta_t \in \F_{2^{2sn}}$, while $\beta_{t+j} \in \F_{2^{2^{j+1}sn}}$ for any $j \geq 1$. For such an integer $t$ we have that $f_t$ has degree $2n$, while $f_t^{(Q,\alpha)}$ has degree $4n$. More in general, for any $i \geq t$, we have that $f_{i+1} = f_{i}^{(Q, \alpha)}$ and the degree of $f_{i+1}$ is twice the degree of $f_i$.  
\end{proof}
\begin{remark}
In the hypotheses of Theorem \ref{iter_thm} we require that the roots of $f_1$ are not $\theta_{\alpha}$-periodic. Indeed, this is true if $f_1 = f_0^{(Q,\alpha)}$, since in this circumstance the degree of $f_1$ is twice the degree of $f_0$ and consequently the roots of $f_0$ are leaves of $\Gr_{sn}(\alpha)$. 

Consider now the case that $f_0^{(Q,\alpha)} (x) = g_1(x) \cdot g_2 (x)$, for some irreducible monic polynomials $g_1, g_2$ of degree $n$ in $\F_{2^s} [x]$. According to Lemma \ref{iter_lem}, at least one of $g_1$ and $g_2$ has no $\theta_{\alpha}$-periodic roots. Suppose that $g_2$ has no $\theta_{\alpha}$-periodic roots. If we set $f_1 := g_1$ and all the polynomials $f_i$ have degree $n$, for $0 \leq i \leq l_s+l_n+3$, then we break the iterations and set $f_1 := g_2$. Since $g_2$ has no $\theta_{\alpha}$-periodic roots, the hypotheses of Theorem \ref{iter_thm} are satisfied and we can construct inductively a sequence of irreducible monic polynomials, as explained in the theorem.  
\end{remark}

\begin{example}
In this example we construct a sequence of irreducible monic polynomials over $\F_8 [x]$ starting from the polynomial $f_0 (x) = x^4+x+a^3$, being $a$ a root of the primitive polynomial $x^3+x+1 \in \F_2 [x]$. We notice that $f_0$ is irreducible in $\F_8 [x]$ (see \cite[Table 5]{gre}).

We set $\alpha:=a$ and proceed as explained in Theorem \ref{iter_thm}. Adopting the notations of the theorem, in the current example we have that $s=3$, $n=4$, $l_s = 0$ and $l_n =2$. Since $f_0^{(Q,\alpha)}$ is not irreducible, it splits into the product of two irreducible monic polynomials of degree $4$. We set $f_1$ equal to one of the two factors of $f_0^{(Q, \alpha)}$, namely
\begin{equation*}
f_1 (x) := x^4+a^4  x^3 + x^2 + a^2  x + a^6.
\end{equation*}
We notice that $f_1^{(Q, \alpha)}$ is irreducible of degree $8$ and set $f_2 := f_1^{(Q, \alpha)}$. Since $f_2^{(Q,\alpha)}$ is irreducible of degree $16$ in $\F_{8} [x]$, implying that $f_3$ has degree $4n=16$, according to Theorem \ref{iter_thm} all the polynomials $f_i^{(Q, \alpha)}$ are irreducible for $i \geq 2$. Hence, no more factorization is required and we can generate an infinite sequence of irreducible monic polynomials of increasing degree. 
\end{example}

\bibliography{Refs}
\end{document}